\renewcommand{\arraystretch}{3}
\numberwithin{equation}{section}  
\newcolumntype{H}{>{\setbox0=\hbox\bgroup}c<{\egroup}@{}}
\DeclareMathAlphabet{\mathpzc}{OT1}{pzc}{m}{it}
\DeclareMathAlphabet{\mathcalligra}{T1}{calligra}{m}{n}
\DeclareMathAlphabet{\mathpzc}{OT1}{pzc}{m}{it}
\DeclareMathAlphabet{\mathcalligra}{T1}{calligra}{m}{n}
\DeclareMathAlphabet{\mathpzc}{OT1}{pzc}{m}{it}
\DeclareMathAlphabet{\mathcalligra}{T1}{calligra}{m}{n}
\begin{document}
\newtheorem{theorem}{\bf Theorem}[section]
\newtheorem{proposition}[theorem]{\bf Proposition}
\newtheorem{definition}{\bf Definition}[section]
\newtheorem{corollary}[theorem]{\bf Corollary}
\newtheorem{exam}[theorem]{\bf Example}
\newtheorem{remark}[theorem]{\bf Remark}
\newtheorem{lemma}[theorem]{\bf Lemma}
\newtheorem{assum}[theorem]{\bf Assumption}
\newcommand{\von}{\vskip 1ex}
\newcommand{\vone}{\vskip 2ex}
\newcommand{\vtwo}{\vskip 4ex}
\newcommand{\ds}{\displaystyle}
\def \noin{\noindent}
\newcommand{\be}{\begin{equation}}
\newcommand{\eeno}{\end{equation*}}
\newcommand{\ba}{\begin{align}}
\newcommand{\ea}{\end{align}}
\newcommand{\bano}{\begin{align*}}
\newcommand{\eano}{\end{align*}}
\newcommand{\bea}{\begin{eqnarray}}
\newcommand{\eea}{\end{eqnarray}}
\newcommand{\beano}{\begin{eqnarray*}}
\newcommand{\eeano}{\end{eqnarray*}}
\newcommand{\chatv}{\hatv{c}}
\def \noin{\noindent}
\def\arraystretch{1.3}
\def \tcK{{\tilde {\mathcal K}}}    
\def \O{{\Omega}}
\def \cT{{\mathcal T}}
\def \cV{{\mathcal V}}
\def \cE{{\mathcal E}}
\def \R{{\mathbb R}}
\def \V{{\mathbb V}}
\def \S{{\mathbb S}}
\def \N{{\mathbb N}}
\def \Z{{\mathbb Z}}
\def \Mc{{\mathcal M}}
\def \Cc{{\mathcal C}}
\def \Rc{{\mathcal R}}
\def \Ec{{\mathcal E}}
\def \Gc{{\mathcal G}}
\def \Tc{{\mathcal T}}
\def \Qc{{\mathcal Q}}
\def \Ic{{\mathcal I}}
\def \Pc{{\mathcal P}}
\def \Oc{{\mathcal O}}
\def \Uc{{\mathcal U}}
\def \Yc{{\mathcal Y}}
\def \Ac{{\mathcal A}}
\def \Bc{{\mathcal B}}
\def \k{\mathpzc{k}}
\def \Rp{\mathpzc{R}}
\def \Jc{\mathpzc{J}}
\def \Os{\mathscr{O}}
\def \Js{\mathscr{J}}
\def \Es{\mathscr{E}}
\def \Fs{\mathscr{F}}
\def \Qs{\mathscr{Q}}
\def \Ss{\mathscr{S}}
\def \Cs{\mathscr{C}}
\def \Ds{\mathscr{D}}
\def \Ms{\mathscr{M}}
\def \Ts{\mathscr{T}}
\def \LL{L^{\infty}(L^{2}(\Omega))}
\def \LH{L^{2}(0,T;H^{1}(\Omega))}
\def \B {\mathrm{BDF}}
\def \el {\mathrm{el}}
\def \re {\mathrm{re}}
\def \e {\mathrm{e}}
\def \div {\mathrm{div}}
\def \CN {\mathrm{CN}}
\def \Rs   {\mathbf{R}_{{\mathrm es}}}
\def \Rb {\mathbf{R}}
\def \Jb {\mathbf{J}}
\def  \apos {\emph{a posteriori~}}
\def\mean#1{\left\{\hskip -5pt\left\{#1\right\}\hskip -5pt\right\}}
\def\jump#1{\left[\hskip -3.5pt\left[#1\right]\hskip -3.5pt\right]}
\def\smean#1{\{\hskip -3pt\{#1\}\hskip -3pt\}}
\def\bsmean#1{\Big\{\hskip -4.5pt\Big\{#1\Big\}\hskip -4.5pt\Big\}}
\def\sjump#1{[\hskip -1.5pt[#1]\hskip -1.5pt]}
\def\jumptwo{\jump{\frac{\p^2 u_h}{\p n^2}}}
\def  \apos {\emph{a posteriori~}}
\def\mean#1{\left\{\hskip -5pt\left\{#1\right\}\hskip -5pt\right\}}
\def\jump#1{\left[\hskip -3.5pt\left[#1\right]\hskip -3.5pt\right]}
\def\smean#1{\{\hskip -3pt\{#1\}\hskip -3pt\}}
\def\bsmean#1{\Big\{\hskip -4.5pt\Big\{#1\Big\}\hskip -4.5pt\Big\}}
\def\sjump#1{[\hskip -1.5pt[#1]\hskip -1.5pt]}
\def\jumptwo{\jump{\frac{\p^2 u_h}{\p n^2}}}
\title{Optimal control of fractional Poisson equation from non-local to local}

\author{Kedarnath Buda \thanks{Department of Mathematics \& Statistics, Indian Institute of Technology Kanpur, Kanpur - 208016,
India; \tt{( Kedarnath.buda@gmail.com)}} 
~~ B. V. Rathish Kumar\thanks{Department of Mathematics \& Statistics, Indian Institute of Technology Kanpur, Kanpur - 208016,
India; \tt{( drbvrk11@gmail.com)}.} ~~ and ~~ Ram Manohar \thanks{Department of Mathematics \& Statistics, Indian Institute of Technology Kanpur, Kanpur - 208016,
India; \tt{(rmanohar@gmail.com)}.}}

\date{}

\maketitle

\textbf{Abstract.}{\small{
In this article, the limiting behavior of the solution $\bar u_s$ of the optimal control problem subjected to the fractional Poisson equation 
$$(-\Delta)^s u_s(x)=f_s(x), \quad x\in \Omega$$
defined on domain $\Omega$ bounded by smooth boundary with zero exterior boundary conditions $u_s(x)\equiv 0, \quad x \in \Omega^c $ is established. We will prove that  $\lim_{s\to 1^-} \bar u_s= \bar u$,  where $\bar u$ is a solution of the optimal control problem subjected to classical Poisson equation  $-\Delta u(x)=f(x), \quad x \in \Omega$ and  $u(x)=0, \quad  x\in \partial \Omega.$
}}  \\ \smallskip
	
\textbf{Key words.}
Fractional Laplacian; $\Gamma$-convergence;  Weak solution; optimal control; State constraints

\section{Introduction}
\label{Sec:1}
 There have recently been significant developments in nonlocal PDE problems owing to their applications in various fields of science and engineering. Today, nonlocal PDE problems find their applications in the areas of turbulence, elasticity, image processing, finance model, geo-magnetic, transport, and diffusion (see \cite{bakunin2008turbulence}, \cite{dipierro2015dislocation}, \cite{gilboa2009nonlocal}, \cite{meerschaert2012fractional}). There have been intensive theoretical studies in nonlocal PDE problems involving Fractional Laplacian (see \cite{di2012hitchhikers}, \cite{biccari2017poisson} \cite{cozzi2017interior}, \cite{ros2014dirichlet}) in the last few decades. The fractional Laplacian is considered as a nonlocal operator because its value at any point depends on the function's values at all other points in the domain. Unlike the classical Laplacian, which involves only local derivative, the fractional laplacian is defined through an integral over the entire space. at any point $x$, the value of $(-\Delta)^su(x)$ depends on the value of $u(y)$ for all $y\in \mathbb{R}^N$. This is why a careful, rigorous, and intensive study must be done to understand it. Through comprehensive developments, fractional Laplacian may be defined in fractional Sobolev spaces in multiple ways. Mathematicians from both the the pure and applied mathematics domain have thoroughly worked on the PDEs involving the fractional Laplacian. In image processing, denoising is an essential breakthrough for image construction. The Total Variation (TV) regularization is being used to denoise the image (see \cite{rudin1992nonlinear}). The total variation minimizing function (classical ROF-denoising approach) is defined as: 

$$\min_{u\in L^2(\Omega)}J(u)= \frac{1}{2}\int_{\Omega}|\nabla u|^2+ \frac{\mu}{2}\|u-f\|^2_{L^2(\Omega)}.$$

Later in \cite{antil2017spectral}, the author replaces the square of semi-norm $\int_{\Omega}|\nabla u|^2 $  with square of fractional Gagliardo-Nirenberg semi-norm, $\int_\Omega |(-\Delta)^{\frac{s}{2}}u|^2$  of fractional order $s\in (0,1)$ and modified fractional variational problem is 
$$\min_{u}\frac{1}{2} \int_\Omega |(-\Delta)^{\frac{s}{2}}u|^2 + \frac{\mu}{2}\|u-f\|^2_{L^2(\Omega)}.$$
where $u$ be the denoised version of $f \in L^2(\Omega)$, the noisy image, $\mu$ is the regularization parameter and $\Omega$ is the domain of image. 
\section{Preliminaries: Analytical Setting}
In this section, we set the basic notation and terminology which is required to prove our result. Firstly we will understand the Fractional laplacian operator and the space where it is defined. Many recent advances have also been discussed, which hold a great deal of significance in studying the Fractional Laplacian and its optimization. Let us define a fractional Laplacian operator in $\mathbb{R}^N$, which is a nonlocal operator defined by the singularity of the integral at $x\in \mathbb{R}^N$, 
\begin{equation}\label{eq:2.1}
    (-\Delta)^su(x) =C_{N,s} \text{P.V.} \int_{\mathbb{R}^N} \frac{u(x)-u(y)}{|x-y|^{N+2s}} \; \mathrm{d} y 
 \end{equation}
where 
$$C_{N,s}:= \frac{s2^{2s}\Gamma(\frac{N+2s}{2})}{\pi^{N/2}\Gamma(1-s)}. \label{e1}$$ 

The integration defined above is in Principal value sense, i.e., the singular integral as for $x\in \mathbb{R}^N$,
$$(-\Delta)^su(x) =C_{N,s}\lim_{\epsilon \to 0} \int_{\mathbb{R}^N \setminus B_\epsilon(x)} \frac{u(x)-u(y)}{|x-y|^{N+2s}}\;\mathrm{d}y \label{e11}.$$

To make sense, the integral \ref{eq:2.1}, we have to take  $u\in \mathcal{C}^{1,1}_{loc} \cap \mathcal{L}_s^1$,
where 
$$\mathcal{L}_s^1(\mathbb{R}^N)= \left\{u\in \mathcal{L}_{loc}^1| \int_{\mathbb{R}^N} \frac{|u(x)|}{1+|x|^{N+2s}}\;\mathrm{d}x < \infty \right\}.$$

We can easily verify that for the integration \ref{eq:2.1} defined above, the P.V. sense in it can be ignored for $s\in(0,\frac{1}{2})$ (see \cite{di2012hitchhikers}) because the integral will no more be singular near $x$ for such $s$.\\

If a regular function $u$ on $\R^N$ is identically zero near the point $x$, then $\Delta u(x)=0$. But in the case of fractional, this is not true, as in our bounded domain $\Omega$, let us consider $u>0 $ in $\Omega $ and $u\equiv 0$ in $\mathbb{R}^N\setminus \Omega$.
For $x\in \mathbb{R}^N\setminus \Omega $, we have, 

$$(-\Delta)^su(x) =C_{N,s} \text{P.V.} \int_{\Omega} \frac{-u(y)}{|x-y|^{N+2s}} \;\mathrm{d}y < 0.$$
Here, we can notice that the integral is negative. Hence, its integration is negative over $\Omega$ at each $x \in \mathbb{R}^N$. The value of $(-\Delta)^{s}u(x)$ depends not only on the values of $u$ near $x$ but also $\forall x \in \mathbb{R}^N$. Hence, the fractional Laplacian $(-\Delta)^{s}$ is said to be a nonlocal operator. Now, we are going to define the function spaces. \\

The fractional Sobolev space $H^s(\Omega)$ is defined by:
$$H^s(\R^N):=\left\{ u\in L^2(\R^N):   [u]_{H^s(\R^N)} <\infty \right\},$$ 
and 
$$H_0^s(\Omega):=\left\{ u\in H^s(\R^N):   u(x)=0 \text{ in } \R^N \setminus \Omega \right\},$$

where the seminorm is also known as the Gagliardo-Nirenberg semi-norm of $u$ defined as
$$[u]_{H^s(\Omega)}^2:= \int_{\R^N}\int_{\R^N} \frac{(u(x)-u(y))^2}{|x-y|^{N+2s}}\;\mathrm{d}x\;\mathrm{d}y,$$ 

where $H^{s}(\Omega)$ is the Hilbert space along with the norm
$$\|u\|_{H^s(\Omega)}^2:=\|u\|_{L^2(\Omega)}^2+ [u]_{H^s(\Omega)}^2.$$
 
Also the closure of $C_0^\infty(\Omega)$ in $H^s(\R^N)$ with respect to the norm $\|.\|_{H^s(\Omega)}$ is defined as $H^s_0(\Omega)$, i.e., $H^s_0(\Omega):=\overline{{C_0^\infty(\Omega)}}^{H^s(\R^N)}$.\\

Also for $s>\frac{1}{2}$, we have the important identity(see  \cite{fiscella2015density})
 $${H^s_0(\overline{\Omega})}={H^s_0(\Omega)}. $$

The dual space of $H^s_0(\Omega)$ is known as $H^{-s}(\Omega)$ which is also a Hilbert space along with the norm $\|.\|_{H^{-s}(\Omega)}$.

For $0<t\leq s <1, ~H^s(\Omega)\hookrightarrow H^t(\Omega)$ (see \cite{di2012hitchhikers}).\\

Hence, the following Equation follows from the Gelfand triplet and \cite{di2012hitchhikers} follows. 
The following identity: 
$$H^s_0(\Omega) \subset H^s(\R^N) \subset L^2(\R^N) \subset H^{-s}(\R^N) \subset H^{-s}(\Omega).$$

In this article, we are going to study the integral fractional Laplacian approach. In particular, we have the following identities (see \cite{di2012hitchhikers}):

$$\lim_{s\rightarrow{1^{-}}}(-\Delta)^s u = -\Delta u ~~~~ \lim_{s\rightarrow{0^{+}}}(-\Delta)^s u=u.$$ 

PDE-constrained optimization is a new, rapidly growing area. The study of optimization techniques for solving such problems is quite interesting as well as challenging both numerically and analytically. Interested readers can go through(see \cite{troltzsch2010optimal}, \cite{lions1971optimal}, \cite{antil2017spectral}) for exposure to PDE-constrained optimization problems.                                                   
Let  $s\in (0,1)$ be a real number and $\Omega \subset \mathbb{R}^N$ be a bounded and regular domain. Let us consider the fractional Poisson equation: 
                                                             
\begin{equation} \label{eq:1.1}
    \begin{cases}
        (-\Delta)^s u_s = f_s,  ~~~~  x \in \Omega, \\
                    u_s = 0,    ~~~~  x \in \mathbb{R}^{N}
    \end{cases}
\end{equation}
where the integral definition of fractional Laplacian is the following:
$$(-\Delta)^s u_s(x):= C_{N,s} \text{P.V.} \int_{\mathbb{R}^N} \frac{u_s(x)-u_s(y)}{|x-y|^{N+2s}}\; \mathrm{d}y,$$
where $C_{N,s}$ be the normalising constant is defined in the above $\ref{e1}$ .

\begin{definition} [see \cite{di2012hitchhikers}]Let $f_s\in H^{-s}(\Omega)$. A function $u_s \in H^s(\Omega) $ is said to be the weak solution of the problem \ref{eq:1.1} if                                                           
$$ C_{N,s}\int_{\mathbb{R}^N} \int_{\mathbb{R}^N} \frac{(u_s(x)-u_s(y))(v(x)-v(y))}{|x-y|^{N+2s}} \; \mathrm{d} x  \mathrm{d} y = \int_{\Omega} f_sv \; \mathrm{d} x \quad \forall v \in H^s_0(\Omega).$$
\end{definition}
The left-hand side of the above term induces the semi-norm $[.]_{H^s(\Omega)}$, which will be discussed in the next section.

Consider the map $J_s:H^s_0(\Omega) \to \mathbb{R}$ defined by $u\mapsto \|u\|^2_{H^s(\Omega)}$ be functional which is Fr/'echt differentiable in 
$u \in H^s_0(\Omega)$ for any $v\in H^s_0(\Omega)$.\\

We can write $$\lim_{t\to 0}\frac{ [u+tv]^2_{H^s(\Omega)}-[u]^2_{H^s(\Omega)}}{t}= 2\int_{\R^N} \int_{\R^N} \frac{(u(x)-u(y))(v(x)-v(y))}{|x-y|^{N+2s}}\;\mathrm{d}x\;\mathrm{d}y,$$
also
$$(u_1,u_2)_{H^s(\R^N)}= 2 C_{N,s}^{-1}((-\Delta)^\frac{s}{2}u_1,(-\Delta)^\frac{s}{2}u_2)_{L^2(\R^N)} \\ 
 =2 C_{N,s}^{-1}(u_1,-\Delta^s u_2)_{L^2(\R^N)}.$$

Hence, we can write (see \cite{di2012hitchhikers}), 
$$\label{seminorm} [u]^2_{H^s(\R^N)}= C_{N,s}^{-1}\|(-\Delta)^\frac{s}{2} u\|^2_{L^2(\R^N)},$$
where 
$$(u_1,u_2)_{H^s(\R^N)}:= \int_{R^N}\int_{\R^N}\frac{(u_1(x)-u_1(y))(u_2(x)-u_2(y))}{|x-y|^{N+2s}}\;\mathrm{d}x\;\mathrm{d}y.$$

\begin{lemma} [see \cite{cea}]
A bounded sequence  $\left\{g_n\right\}_{n=1}^\infty $ in the Hilbert space has a weakly convergent subsequence $\left\{g_{nk}\right\}_{k=1}^\infty $.
\end{lemma}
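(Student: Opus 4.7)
The plan is to combine the Riesz representation theorem with a Cantor diagonal extraction on the countable, separable subspace generated by the sequence itself. This circumvents any assumption of separability on the ambient Hilbert space $H$.

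First I would reduce to a separable setting by considering $M := \overline{\mathrm{span}\{g_n : n\geq 1\}} \subset H$, which is a closed separable subspace of $H$ containing every $g_n$. Let $\{h_m\}_{m=1}^{\infty}$ be a countable dense subset of $M$. For each fixed $m$, Cauchy--Schwarz gives
\[
|(g_n, h_m)_H| \leq \|g_n\|_H\, \|h_m\|_H \leq C\, \|h_m\|_H,
\]
so the scalar sequence $\{(g_n, h_m)_H\}_n$ is bounded in $\R$. By Bolzano--Weierstrass and a Cantor diagonal argument, I extract a single subsequence $\{g_{n_k}\}_k$ such that $(g_{n_k}, h_m)_H$ converges in $\R$ for every $m$.

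Next I would upgrade this pointwise convergence on the dense set to convergence of $(g_{n_k}, h)_H$ for every $h \in M$. Given $h \in M$ and $\varepsilon>0$, pick $h_m$ with $\|h - h_m\|_H < \varepsilon$; then
\[
|(g_{n_k} - g_{n_j}, h)_H| \leq |(g_{n_k} - g_{n_j}, h_m)_H| + 2C\varepsilon,
\]
showing $\{(g_{n_k}, h)_H\}_k$ is Cauchy in $\R$, hence convergent. For any $h \in H$, decompose $h = h_M + h_{M^\perp}$ orthogonally; since each $g_{n_k} \in M$, $(g_{n_k}, h_{M^\perp})_H = 0$, and therefore $(g_{n_k}, h)_H = (g_{n_k}, h_M)_H$ converges. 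Define
\[
\ell(h) := \lim_{k\to\infty} (g_{n_k}, h)_H, \qquad h \in H.
\]
Linearity is immediate, and $|\ell(h)| \leq C\|h\|_H$ by passing to the limit in Cauchy--Schwarz, so $\ell \in H^*$. By the Riesz representation theorem there exists $g \in H$ with $\ell(h) = (g, h)_H$ for all $h$, which is precisely the statement $g_{n_k} \rightharpoonup g$.

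The only delicate point is the separability reduction at the very beginning; without passing to $M$, the diagonal argument would need an uncountable test family. Once that is in place, everything else reduces to Cauchy--Schwarz, a standard $\varepsilon/3$ density argument, and Riesz representation, none of which require further hypotheses on $H$.
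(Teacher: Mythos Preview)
Your argument is correct and complete: the reduction to the separable closed span $M$, the diagonal extraction against a countable dense set, the density upgrade, the orthogonal extension to all of $H$, and the identification of the limit via Riesz representation are all carried out properly. Note, however, that the paper does not supply its own proof of this lemma; it merely states the result with a reference to \cite{cea}, so there is no in-paper argument to compare your approach against.
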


\begin{definition}[\textbf{$\Gamma-$convergence} \cite{braides2002oxford}] 
$F$ be the Gamma limit of $F_n$  or in other words $ F$ is the $\Gamma$-$\lim_{n\to \infty} F_n$.
The sequential limit of $F_n$ is the function $F$ if  the following conditions are satisfied: 
\\

(i) For every $x\in X$ and for every sequence $\left\{x_n\right\}$ converging to $x$ in $X$ into $\R \cup \left\{-\infty,+\infty\right\} $ then $$\liminf_{n\to \infty}F_n(x_n)\geq F(x).$$
\\
(ii) For every $x\in X$,there exists a sequence $\left\{x_n\right\}$ converging to $x\in X$ such that 
                $$\lim_{n\to \infty}F_n(x_n)=F(x).$$
 \end{definition}                           
The following lemma is crucial to prove our main result.
\begin{lemma}[see \cite{braides2002oxford}] 
\label{lemmaB}
Let $x_n^*$ be the minimizer of $F_n$ for each $n$. Let $F$ be the $\Gamma-$limit of $F_n$ and $x_n^* \to x^*$ for some $x^*$. Let $E$ be  the set containing all those points where F's value is finite, then $x^*$ be the minimizer of $F$ and 
$$\lim_{n\to \infty}F_n(x_n^*) = F(x^*).$$
\end{lemma}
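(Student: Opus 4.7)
The plan is to exploit the two $\Gamma$-convergence conditions in tandem: condition (i) pins $F(x^*)$ from below against the converging sequence of minimizers, while condition (ii) supplies recovery sequences at arbitrary competitors to bound $F_n(x_n^*)$ from above. The entire argument is a short sandwich estimate, once the two conditions are applied at the right test points.

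First I would apply condition (i) directly to the sequence $\{x_n^*\}$ itself: since $x_n^* \to x^*$ by hypothesis, one obtains $F(x^*) \leq \liminf_{n\to\infty} F_n(x_n^*)$ immediately. Next, to prove that $x^*$ minimizes $F$, I would fix an arbitrary $y \in E$ and invoke condition (ii) to produce a recovery sequence $y_n \to y$ with $F_n(y_n) \to F(y)$. Because $x_n^*$ is a minimizer of $F_n$, we have $F_n(x_n^*) \leq F_n(y_n)$ for every $n$; passing to $\limsup$ and chaining with the $\liminf$ bound gives
\begin{equation*}
F(x^*) \;\leq\; \liminf_{n\to\infty} F_n(x_n^*) \;\leq\; \limsup_{n\to\infty} F_n(x_n^*) \;\leq\; \lim_{n\to\infty} F_n(y_n) \;=\; F(y).
\end{equation*}
For $y \notin E$ the bound $F(x^*) \leq F(y)$ is vacuous, so $x^*$ is a global minimizer of $F$.

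Finally, to upgrade the sandwich into a genuine limit, I would specialize the display above to $y = x^*$. Provided $E$ is nonempty (otherwise the minimization problem is trivial), minimality forces $F(x^*) \leq F(y) < \infty$ for some $y \in E$, so $x^* \in E$ and a recovery sequence exists at $x^*$ as well; plugging this back in collapses the outer ends of the chain to $F(x^*)$, whence $\lim_{n\to\infty} F_n(x_n^*) = F(x^*)$.

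The main obstacle is purely logical rather than technical: one must avoid circularity by refusing to invoke the recovery sequence at $x^*$ before establishing $F(x^*) < \infty$. The order above handles this cleanly — first compare against other points of $E$ to obtain minimality, then use minimality together with $E \neq \emptyset$ to conclude $x^* \in E$, and only then extract the value convergence via the recovery sequence at $x^*$ itself.
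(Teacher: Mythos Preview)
Your argument is correct and is exactly the standard proof of the fundamental theorem of $\Gamma$-convergence as found in the cited reference \cite{braides2002oxford}; the paper itself does not supply an independent proof but merely cites that source, so there is nothing further to compare.
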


\section{Main Problem}
Let $ s\in (0,1)$ and $ \Omega \subset \mathbb{R}^N $ be the open and bounded set with Lipschitz boundary $\partial \Omega$. Consider  $\mu >0$ be the regular parameter, control $f_s\in L^2(\Omega)$, Our proposed PDE-constrained optimized  problem is the following : 
\begin{equation}\label{eq:3.1}
\min J_s(u_s,f_s):=\frac{1}{2}\Big([u_s]^2_{H^s(\Omega)}+\mu\|f_s\|^2_{L^2(\Omega)} \Big)  
\end{equation}
subject to (the {\textit{fractional Poisson equation})
\begin{equation}\label{eq:3.2}
 \begin{cases}
(-\Delta)^s u_s = f_s ,  & x\in \Omega, \\
u_s=0 ,     & x\in  \Omega^c, 
    \end{cases} \quad \quad  
    \end{equation}
and the control constraints
\begin{equation}\label{eq:3.3}
 a \leq \|f_s\|_{L^2(\Omega)} \leq b.   
\end{equation}
\\
Consider  $U_{ad}$  is the set of  admissible controls such that 
$$U_{ad}= \left\{ f_s \in L^2(\Omega) \; | \quad a \leq \|f_s\|_{L^2(\Omega)} \leq b \; \; \text{a.e.} \;\; x \in \Omega \right\} $$ for  $a,b \in \R$.

As the state $u_s$ depends on control $f_s$, then $u_s$ can also be written as $u_s= u_s(f_s)$ and so from now we can use notation  $J_s(f_s)$ instead of   $J_s(u_s(f_s),f_s)$.
In the above problem  \ref{eq:3.1},\ref{eq:3.2},\ref{eq:3.3} we are going to find a optimal control $\bar{f}_s \in U_{ad} \subset  L^2(\Omega)$ in a such a way that the corresponding solution of $\bar{u}_s$ together with $\bar{f}_s$ satisfies the minimization of the cost function, i.e.,\\
\begin{equation*}
    J_s(\bar{f}_s):=\min_{f_s \in U_{ad}} J_s(f_s),
\end{equation*}
and corresponding optimal control of classical Poisson equation with homogeneous boundary conditions is the following 
\begin{equation}\label{eq:3.4}
    \min J(u,f)=\frac{1}{2}\Big(\| \nabla u \|^2_{L^2(\Omega)}+\mu \|f\|^2_{L^2(\Omega)} \Big) 
\end{equation}
subject to (the$~Poisson ~equation $)

\begin{equation}
     \begin{cases}
(-\Delta)u = f ,  & x\in \Omega, \\
u=0,     & x\in  \partial \Omega,
\end{cases} 
\end{equation}
and the control constraints 
\begin{equation}
   a \leq \|f\|_{L^2(\Omega)} \leq b.  
\end{equation}
Our main objective is to find the optimal control  $\bar{f}$ such that 
\begin{equation*}
    J(\bar{f})=\min_{f \in U_{ad}} J(f).
\end{equation*}
\begin{proposition} [see \cite{biccari2017poisson}]\label{prop} Let $\mathcal{F}_s=\left\{f_s\right\}_{0<s<1} \subset H^{-s}(\Omega)$ be the sequence satisfying $\|f_s\|_{H^{-s}(\Omega)} $ uniformly bounded with respect to $s$ and $f_s \rightharpoonup f$ weakly in $H^{-1}(\Omega)$ as $s\to 1^-$, then $u_s \to u $ strongly in $H_0^{1-\delta}(\Omega)$ for some $C>0$ and $0<\delta \leq 1$.
\end{proposition}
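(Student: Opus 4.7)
My plan is to proceed by a compactness-and-identification argument, combined with a Fourier-analytic uniform Sobolev estimate. First I would obtain the fundamental energy identity by testing the weak formulation of \eqref{eq:3.2} with $u_s$ itself; using the identity $C_{N,s}[u_s]_{H^s(\R^N)}^2 = 2\|(-\Delta)^{s/2}u_s\|_{L^2(\R^N)}^2$ recorded earlier in the excerpt, this rearranges to
\begin{equation*}
\|(-\Delta)^{s/2}u_s\|_{L^2(\R^N)}^2 \;=\; \tfrac{1}{2}\langle f_s, u_s\rangle_{H^{-s},H^s_0} \;\le\; \tfrac{1}{2}\|f_s\|_{H^{-s}(\Omega)}\,\|u_s\|_{H^s_0(\Omega)}.
\end{equation*}
The fractional Poincaré inequality on the bounded set $\Omega$ controls $\|u_s\|_{H^s_0}$ by $\|(-\Delta)^{s/2}u_s\|_{L^2}$ with constants uniform for $s$ in a fixed neighborhood of $1$, so the inequality closes up and the hypothesis $\|f_s\|_{H^{-s}}\le M$ delivers a bound $\|(-\Delta)^{s/2}u_s\|_{L^2(\R^N)} \le C$ independent of $s$.

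Next I would convert this into a uniform bound in a \emph{fixed} Sobolev space. Fix $\delta\in(0,1)$ and restrict $s$ to the interval $(1-\delta/2,\,1)$. Applying Plancherel and splitting frequencies,
\begin{equation*}
\|u_s\|_{H^{1-\delta}(\R^N)}^2 \;\sim\; \int_{|\xi|\le 1}(1+|\xi|^2)^{1-\delta}\,|\widehat{u_s}(\xi)|^2\,\mathrm{d}\xi \;+\; \int_{|\xi|>1}(1+|\xi|^2)^{1-\delta}\,|\widehat{u_s}(\xi)|^2\,\mathrm{d}\xi,
\end{equation*}
where the low-frequency integral is bounded by $\|u_s\|_{L^2}^2$, and the high-frequency integral is bounded by $C\|(-\Delta)^{s/2}u_s\|_{L^2}^2$ because $(1+|\xi|^2)^{1-\delta}\lesssim |\xi|^{2s}$ on $|\xi|>1$ when $s\ge 1-\delta$. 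Combined with the uniform $L^2$ estimate again furnished by Poincaré, this produces a bound for $\{u_s\}$ in $H^{1-\delta/2}_0(\Omega)$ uniformly in $s$ near~$1$ (using that $u_s\equiv 0$ in $\Omega^c$).

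At this stage compactness delivers a subsequence (not relabelled) with $u_s \rightharpoonup \tilde u$ weakly in $H^{1-\delta/2}_0(\Omega)$ and, by the compact embedding $H^{1-\delta/2}_0(\Omega)\hookrightarrow H^{1-\delta}_0(\Omega)$, strongly in $H^{1-\delta}_0(\Omega)$. To identify $\tilde u$, I would pass to the limit in the weak formulation against a test function $v\in C_c^\infty(\Omega)$: shifting the fractional Laplacian onto $v$ gives $\int_\Omega u_s\,(-\Delta)^s v\,\mathrm{d}x = \langle f_s, v\rangle$, and then using the pointwise identity $\lim_{s\to 1^-}(-\Delta)^s v=-\Delta v$ (with enough smoothness of $v$ to make this convergence uniform and in $L^2$) together with the strong $L^2$ convergence $u_s\to\tilde u$ on the left, and $f_s\rightharpoonup f$ in $H^{-1}(\Omega)$ on the right, one obtains $\int_\Omega \tilde u\,(-\Delta v)\,\mathrm{d}x = \langle f,v\rangle$ for every such $v$. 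Uniqueness of the weak solution of the classical Poisson problem then forces $\tilde u = u$, so the whole family converges, not merely a subsequence.

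The main technical obstacle is the second step: transferring the uniform $\|(-\Delta)^{s/2}u_s\|_{L^2}$ bound into a uniform bound in a fixed Sobolev space $H^{1-\delta}_0(\Omega)$, since the normalization $C_{N,s}\sim K_N(1-s)$ degenerates as $s\to 1^-$ and the naïve Gagliardo semi-norm $[u_s]_{H^s}$ itself blows up. Addressing it cleanly requires either the frequency-splitting argument sketched above or an application of the Bourgain--Brezis--Mironescu relation between $(1-s)[u_s]_{H^s}^2$ and $\|\nabla u_s\|_{L^2}^2$, with careful tracking of the constants in the fractional Poincaré inequality; once this uniform Sobolev bound is in hand, the remaining compactness and passage-to-the-limit steps are routine and the identification of the limit is immediate from uniqueness for the classical Poisson problem.
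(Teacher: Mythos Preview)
The paper does not supply its own proof of this proposition: it is stated with the attribution ``[see \cite{biccari2017poisson}]'' and then immediately used as a black box, with the remark ``Here, we will extend this proposition \ldots\ to optimal control of fractional PDE in our main result.'' Consequently there is no in-paper argument to compare your proposal against.

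For what it is worth, your outline is the standard route to such a statement and is essentially the strategy one finds in the cited source: an energy estimate from the weak formulation, a uniform bound in a fixed Sobolev scale (handled either by the Fourier splitting you describe or, equivalently, via the Bourgain--Brezis--Mironescu relation controlling $(1-s)[u_s]_{H^s}^2$), compactness of the embedding $H_0^{1-\delta/2}(\Omega)\hookrightarrow H_0^{1-\delta}(\Omega)$, and identification of the limit by passing to the limit against smooth test functions using $\lim_{s\to 1^-}(-\Delta)^s v=-\Delta v$. Your flagged technical point---that $C_{N,s}\sim K_N(1-s)$ degenerates, so the raw Gagliardo seminorm is not the right quantity to bound uniformly---is exactly the issue one must track, and your frequency-space workaround is a legitimate way to do it.
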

Here, we will extend this proposition \ref{eq:1.1} to optimal control of fractional PDE in our main result.

The first proposition represents about the Poincar\'e inequality in fractional Sobolev space and the second refers to the minimizer of a function defined on a suitable space.
 
\begin{proposition}[Poincar\'e inequality, \cite{bonder2020optimal}] Let $s\in(0,1),\Omega \subset \R^N$ be an open and bounded set then we have 
$$\|u\|^2_2\leq C(N,\Omega,s)[u]^2_{H^s(\Omega)}.$$
where some constant $C(N,\Omega,s)$ depending upon $N,\Omega \text{and} s$.
\end{proposition}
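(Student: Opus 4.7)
The statement is meaningful only for $u\in H^s_0(\Omega)$, i.e.\ functions with $u\equiv 0$ on $\R^N\setminus\Omega$; otherwise the inequality fails for nonzero constants, whose seminorm vanishes. The plan is to exploit the zero exterior condition directly inside the Gagliardo double integral, via an elementary annulus trick, rather than going through a compactness/contradiction argument.

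First, since $\Omega$ is bounded I fix $R>0$ with $\Omega\subset B_R(0)$ and introduce the ``buffer'' annulus $A:=B_{2R}(0)\setminus B_R(0)$, which is disjoint from $\Omega$ and has positive Lebesgue measure $|A|$. For any $x\in\Omega$ and $y\in A$, the exterior condition gives $u(y)=0$, while the diameter bound yields $|x-y|\le 3R$. Consequently
\begin{equation*}
|u(x)|^2 \;=\; \frac{1}{|A|}\int_A |u(x)-u(y)|^2\,\mathrm{d}y \;\le\; \frac{(3R)^{N+2s}}{|A|}\int_A \frac{|u(x)-u(y)|^2}{|x-y|^{N+2s}}\,\mathrm{d}y.
\end{equation*}

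Next, I integrate this pointwise estimate over $x\in\Omega$ and enlarge both the inner and outer domains to $\R^N$ by nonnegativity of the integrand, obtaining
\begin{equation*}
\|u\|_{L^2(\Omega)}^2 \;\le\; \frac{(3R)^{N+2s}}{|A|}\int_{\R^N}\!\!\int_{\R^N}\frac{|u(x)-u(y)|^2}{|x-y|^{N+2s}}\,\mathrm{d}y\,\mathrm{d}x \;=\; \frac{(3R)^{N+2s}}{|A|}\,[u]_{H^s(\Omega)}^2,
\end{equation*}
which is the desired inequality with explicit constant $C(N,\Omega,s)=(3R)^{N+2s}/|A|$.

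The main obstacle here is conceptual rather than technical: the zero extension must enter somewhere, since without it the inequality is false, and the annulus $A$ is precisely the device that encodes this while still keeping $|x-y|$ bounded above so the kernel $|x-y|^{-N-2s}$ can be absorbed into a constant. A purely abstract alternative would assume, for contradiction, a sequence $u_n\in H^s_0(\Omega)$ with $\|u_n\|_2=1$ and $[u_n]_{H^s}\to 0$, extract an $L^2$-convergent subsequence via the fractional Rellich--Kondrachov compactness embedding $H^s_0(\Omega)\hookrightarrow\hookrightarrow L^2(\Omega)$, and identify the limit as a constant that must vanish on $\R^N\setminus\Omega$; but this route hides the constant, requires a nontrivial compactness theorem, and is heavier than the direct argument above.
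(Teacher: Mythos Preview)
Your argument is correct. The annulus trick is clean: the zero exterior condition on $A=B_{2R}(0)\setminus B_R(0)$ lets you write $|u(x)|^2$ as an average of $|u(x)-u(y)|^2$ over $y\in A$, and the uniform bound $|x-y|\le 3R$ converts this into a piece of the Gagliardo integrand with an explicit constant. All steps check out, including the enlargement of the integration domains by nonnegativity.

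As for comparison with the paper: there is nothing to compare. The paper does not prove this proposition; it is quoted from \cite{bonder2020optimal} without argument. Your proof therefore adds genuine content rather than reproducing anything in the text. It is also more informative than the abstract compactness route you sketch at the end, since it yields the explicit constant $C(N,\Omega,s)=(3R)^{N+2s}/|A|$, which in particular stays bounded as $s\to 1^-$ and as $s\to 0^+$ --- a fact that is actually relevant to the limiting analysis carried out later in Section~\ref{sec:4}. One small presentational point: you rightly flag that the inequality only makes sense on $H^s_0(\Omega)$; it would be worth stating this hypothesis explicitly in the proposition itself, since the paper's statement omits it.
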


\begin{proposition} Let the admissible control space $U_{ad}$ be weakly closed, bounded subset of $L^2(\Omega)$ with $J_s:U_{ad}\to \R$ is weakly lower semi-continuous. Then $J_s$ has minimizer in $U_{ad}$.\\
\end{proposition}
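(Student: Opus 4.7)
The plan is to apply the classical direct method of the calculus of variations. Since $J_s$ is a sum of a squared seminorm and a squared $L^2$-norm (both nonnegative), it is bounded below by $0$, so $m := \inf_{f_s \in U_{ad}} J_s(f_s)$ is a finite nonnegative real number. First I would select a minimizing sequence $\{f_{s,n}\}_{n=1}^\infty \subset U_{ad}$ so that $J_s(f_{s,n}) \to m$ as $n \to \infty$.

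Next I would exploit the boundedness of $U_{ad}$ in $L^2(\Omega)$. Because $\{f_{s,n}\}$ is bounded in the Hilbert space $L^2(\Omega)$, the lemma cited from \cite{cea} (or equivalently Banach–Alaoglu together with reflexivity) guarantees the existence of a subsequence, still denoted $\{f_{s,n}\}$, and an element $\bar{f}_s \in L^2(\Omega)$ such that $f_{s,n} \rightharpoonup \bar{f}_s$ weakly in $L^2(\Omega)$. The weak closedness hypothesis on $U_{ad}$ then forces $\bar{f}_s \in U_{ad}$, which secures feasibility of the candidate minimizer.

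To conclude, I would invoke the weak lower semi-continuity of $J_s$ on $U_{ad}$: from $f_{s,n} \rightharpoonup \bar{f}_s$ one obtains
\begin{equation*}
J_s(\bar{f}_s) \;\leq\; \liminf_{n\to\infty} J_s(f_{s,n}) \;=\; m \;=\; \inf_{f_s \in U_{ad}} J_s(f_s).
\end{equation*}
Since the reverse inequality $J_s(\bar f_s) \ge m$ holds trivially by definition of the infimum and because $\bar f_s \in U_{ad}$, equality is achieved and $\bar{f}_s$ is the sought minimizer.

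The only point requiring genuine care, rather than a bookkeeping exercise, is verifying the weak lower semi-continuity assumption in the concrete problem. The $\tfrac{\mu}{2}\|f_s\|_{L^2(\Omega)}^2$ term is weakly l.s.c. because norms in Hilbert spaces are, but the seminorm term $\tfrac12[u_s]_{H^s(\Omega)}^2$ depends on $f_s$ only through the control-to-state map $f_s \mapsto u_s(f_s)$; to make the final chain of inequalities work one must know that this solution map is continuous (in fact linear) from the weak topology of $L^2(\Omega)$ into an appropriate topology on $H_0^s(\Omega)$ along which $[\,\cdot\,]_{H^s(\Omega)}^2$ is weakly l.s.c. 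In the statement as posed this is packaged into the blanket assumption that $J_s$ is weakly l.s.c., so the abstract argument above suffices; if that assumption were instead to be proved, it would be the main obstacle and would be handled via the energy identity $[u_s]_{H^s(\Omega)}^2 = \langle f_s, u_s\rangle$ combined with linearity and boundedness of the solution operator.
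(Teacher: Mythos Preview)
The paper states this proposition without proof, treating it as a standard result from the calculus of variations. Your argument via the direct method is exactly the expected proof and is correct: extract a weakly convergent subsequence from a minimizing sequence using boundedness in the Hilbert space $L^2(\Omega)$, use weak closedness of $U_{ad}$ to ensure the limit is admissible, and conclude with weak lower semi-continuity. Your final paragraph, observing that the weak l.s.c.\ hypothesis is the substantive ingredient and sketching how it would be verified for the concrete $J_s$, is a useful addition that in fact anticipates the paper's next proposition, where precisely this is carried out.
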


The existence and uniqueness of optimal control via strict convex and lower semi-continuous is discussed in the following proposition.  
\begin{proposition} $J_s$ is defined as above satisfying the equations (\ref{eq:3.1}), (\ref{eq:3.2}), (\ref{eq:3.3}), then there exist a unique minimizer of $J_s$.
\end{proposition}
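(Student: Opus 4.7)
The plan is to apply the standard direct method of the calculus of variations, using the preceding proposition on weakly lower semicontinuous functionals together with a strict convexity argument for uniqueness.

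First I would reduce the cost to a function of the control alone. The state equation \eqref{eq:3.2} is linear, and by Lax--Milgram applied to the bilinear form
\begin{equation*}
a_s(u,v)=C_{N,s}\int_{\R^N}\!\!\int_{\R^N}\frac{(u(x)-u(y))(v(x)-v(y))}{|x-y|^{N+2s}}\,\mathrm{d}x\,\mathrm{d}y
\end{equation*}
on $H^s_0(\Omega)$ (boundedness is immediate, coercivity follows from the fractional Poincar\'e inequality), the solution map $S_s:L^2(\Omega)\to H^s_0(\Omega)$, $f_s\mapsto u_s(f_s)$, is well defined, linear, and continuous. Therefore it suffices to study $J_s(f_s):=\tfrac12\big([S_sf_s]^2_{H^s(\Omega)}+\mu\|f_s\|^2_{L^2(\Omega)}\big)$ over $U_{ad}\subset L^2(\Omega)$.

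Next I would check the hypotheses of the preceding existence proposition. The set $U_{ad}$ is bounded in $L^2(\Omega)$ by $b$; I would argue (reading the control constraint as a convex constraint on $f_s$) that $U_{ad}$ is convex and strongly closed in $L^2(\Omega)$, hence weakly closed by Mazur's theorem. For weak lower semicontinuity of $J_s$, if $f_s^n\rightharpoonup f_s$ in $L^2(\Omega)$, then continuity of the linear operator $S_s$ yields $S_sf_s^n\rightharpoonup S_sf_s$ in $H^s_0(\Omega)$; both $[\,\cdot\,]^2_{H^s(\Omega)}$ and $\|\cdot\|^2_{L^2(\Omega)}$ are continuous and convex, so weakly lower semicontinuous, and the two inequalities add to give $J_s(f_s)\le\liminf_{n\to\infty}J_s(f_s^n)$. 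Invoking the preceding proposition then produces a minimizer $\bar f_s\in U_{ad}$.

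For uniqueness I would establish that $J_s$ is strictly convex. The map $f_s\mapsto [S_sf_s]^2_{H^s(\Omega)}$ is the composition of a linear map with a convex quadratic form, hence convex; the term $\tfrac{\mu}{2}\|f_s\|^2_{L^2(\Omega)}$ is strictly convex because $\mu>0$. The sum is therefore strictly convex, and on the convex set $U_{ad}$ this forces the minimizer to be unique: if $\bar f_s^1$ and $\bar f_s^2$ were distinct minimizers, the midpoint would lie in $U_{ad}$ and give a strictly smaller value, a contradiction.

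The main obstacle I anticipate is not the abstract direct method itself but the admissibility of $U_{ad}$: the stated constraint $a\le\|f_s\|_{L^2(\Omega)}\le b$, if read literally with $a>0$, yields a non-convex, non-weakly-closed set (one loses both weak closedness of $\{\|f_s\|\ge a\}$ and the strict-convexity uniqueness argument). I would therefore need either to interpret the constraint pointwise ($a\le f_s(x)\le b$ a.e.), or to assume $a\le 0$ so that only the upper bound is active; with this interpretation in place the remaining arguments above are routine.
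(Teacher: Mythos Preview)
Your argument is essentially the paper's: reduce to the control via the continuous linear solution map $S_s$, establish weak lower semicontinuity of $J_s$, invoke the preceding proposition for existence, and use strict convexity on the convex set $U_{ad}$ for uniqueness; the only minor variant is that the paper derives weak lower semicontinuity from ``strongly continuous plus convex'' rather than from weak-to-weak continuity of $S_s$ combined with weak lower semicontinuity of the norms. Your closing caveat about $U_{ad}$ is well taken---the paper tacitly treats the admissible set as convex and weakly closed without confronting the literal constraint $a\le\|f_s\|_{L^2(\Omega)}$, so your suggested reinterpretation (pointwise bounds, or $a\le 0$) is exactly what is needed to make both proofs go through.
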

\begin{proof} It is well known that the Gagliardo-Nirenberg seminorm is convex, and the quadratic function is strictly convex. 
So strict convexity of quadratic function $[u_s]^2_s$ and $\|f_s\|^2_2$, we can obtain strict convexity of $J_s$. By Poincare inequality and Cauchy-Schwarz inequality, the fractional Poisson equation (\ref{eq:3.2}) implies 
$$[u_s]^2_{H^s(\Omega)}=\int_{\Omega}f_s u_s \;\mathrm{d}x \leq C\|f_s\|_2[u_s]_{H^s(\Omega)}$$

$$\implies [u_s]_{H^s(\Omega)} \leq C\|f_s\|_2.$$
   
Which implies the control of the state operator 
$$S:L^2(\Omega)\to H^s_0(\Omega).$$ defined by $f_s \mapsto u_s$ is strongly continuous.
Thus, the map $f_s \to [u_s]^2_{H^s(\Omega)}$ is strongly continuous from $L^2(\Omega)$ into $\R$. Strict convexity and strong continuous together implies that $J_s$ is weakly lower semi-continuous. By the above propositions, we derived that $J_s$  has a minimizer in $U_{ad}$. Strict convexity implies the uniqueness of the minimizer on a convex set. So $J_s$ has a unique minimizer in $U_{ad}$.
\end{proof}

Now, we are ready to describe the main results of our study.

\section{The behavior of the optimal control problem as $s \to 1^{-} $}
\label{sec:4}

\begin{proposition}[\cite{di2012hitchhikers}] 
\label{propositionB}
Let us consider a fixed $u\in L^2(\R^N)$ with $u\equiv0$ in $\R^N \setminus \Omega$. Then,
\begin{equation*}
\lim_{s\to 1^-} [u]^2_{H^s(\R^N)}=
\begin{cases}
C_{N,s}\|\nabla u\|^2_2    & u \in H^1(\R^N)\\
\infty  & u \notin H^1(\R^N)  
\end{cases}
\end{equation*} and in the sense of distribution $\lim_{s\to 1^-} (-\Delta)^su=-\Delta u$.
\end{proposition}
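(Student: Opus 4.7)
The plan is to invoke the Bourgain–Brezis–Mironescu (BBM) asymptotic formula, which is exactly the content of this proposition up to the bookkeeping of the normalizing constant $C_{N,s}$. I would handle the dichotomy $u\in H^1(\R^N)$ vs.\ $u\notin H^1(\R^N)$ separately, and in both cases the engine is a change of variables followed by a density/approximation argument.

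For the case $u\in H^1(\R^N)$ (equivalently, since $u\equiv 0$ outside $\Omega$, $u\in H^1_0(\Omega)$), I would first substitute $h=y-x$ to rewrite
\begin{equation*}
[u]^2_{H^s(\R^N)}=\int_{\R^N}\int_{\R^N}\frac{|u(x+h)-u(x)|^2}{|h|^{N+2s}}\,dx\,dh,
\end{equation*}
and then pass to polar coordinates $h=r\sigma$, $\sigma\in S^{N-1}$, to separate the radial and angular behaviour:
\begin{equation*}
[u]^2_{H^s(\R^N)}=\int_{S^{N-1}}\int_0^\infty r^{-1-2s}g_{\sigma}(r)\,dr\,d\sigma,\qquad g_\sigma(r):=\int_{\R^N}|u(x+r\sigma)-u(x)|^2\,dx.
\end{equation*}
By density of $C_c^\infty(\Omega)$ in $H^1_0(\Omega)$ I would reduce to smooth $\phi$, for which a Taylor expansion yields $g_\sigma(r)=r^2\int|\nabla\phi\cdot\sigma|^2\,dx+o(r^2)$ near $r=0$, while the global bound $g_\sigma(r)\le 4\|u\|^2_{L^2}$ controls the tail. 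Splitting the radial integral at $r=1$ and applying dominated convergence against the dominating function supplied by these two estimates, together with the elementary identity $\int_{S^{N-1}}|\sigma\cdot e|^2\,d\sigma=\omega_{N-1}/N$, collapses the angular integral onto $\|\nabla\phi\|^2_{L^2}$ with the correct dimensional constant. Passing from $\phi$ back to $u$ by the density approximation and the Cauchy–Schwarz-type estimate on the bilinear form yields the stated limit.

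For the case $u\notin H^1(\R^N)$, I would argue by contradiction: if along some sequence $s_n\to 1^-$ the (normalized) seminorm stays bounded, then Fatou's lemma applied to the double integral, combined with the pointwise identification of the difference quotient as a surrogate for the gradient, forces the distributional gradient of $u$ to be in $L^2(\R^N)$—contradicting $u\notin H^1(\R^N)$. Alternatively one can cite directly the reverse BBM inequality (Ponce, Brezis). The distributional convergence $(-\Delta)^s u\to -\Delta u$ is independent and follows by testing against $\varphi\in\mathcal{S}(\R^N)$, passing to Fourier multipliers $|\xi|^{2s}\hat u(\xi)$, and using $|\xi|^{2s}\to|\xi|^2$ with dominated convergence.

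The main obstacle, to my mind, is the precise tracking of the constant $C_{N,s}$: since $\Gamma(1-s)\sim (1-s)^{-1}$ as $s\to 1^-$, one has $C_{N,s}=O(1-s)$, while the unnormalized seminorm $[u]^2_{H^s(\R^N)}$ blows up like $(1-s)^{-1}$ for $u\in H^1$. The finite limit on the right-hand side of the proposition is therefore a compensated product, and the dominating function used in the dominated convergence step has to carry the $(1-s)$ factor explicitly so that the radial integrals $\int_0^1 r^{-1-2s}r^2\,dr = \tfrac{1}{2(1-s)}$ are balanced correctly. Once that normalization is pinned down, the remaining ingredients (polar decomposition, $C_c^\infty$-density, Taylor expansion) are standard.
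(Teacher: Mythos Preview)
The paper does not supply a proof of this proposition; it is simply quoted from \cite{di2012hitchhikers} (the Bourgain--Brezis--Mironescu asymptotic formula) and used as a black box in the proof of Theorem~\ref{thm}. Your sketch is a faithful outline of the standard argument in that reference and in the original BBM paper: the change of variables $h=y-x$, polar decomposition, Taylor expansion for $C_c^\infty$ functions, density, dominated convergence with explicit tracking of the $(1-s)$ compensation coming from $C_{N,s}$, and the Fourier-multiplier argument for the distributional convergence $(-\Delta)^s u\to -\Delta u$. There is nothing on the paper's side to compare against, so your proposal stands as a correct independent justification of a result the paper merely cites.
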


\begin{proposition}[see \cite{bourgain2001another}]
Given a sequence $s\to 1^-$ and $\left\{u_s\right\}_s$ be such that 
\begin{equation*}
\sup_{s}\|u_s\|_2 < \infty \text{ and }     \sup_{s} [u_s]_{H^s(\Omega)} <  \infty
\end{equation*}
then there exists a function $u\in H^1(\R^N)$ such that (upto a subsequence),
\begin{equation*}
u_s \to u\text{ strongly in } L_{loc}^2(\R^N) \text{  and   } C_{N,s} \|\nabla u \|^2_2 \leq \liminf_{s\to 1^-} [u_s]_{H^s(\Omega)}^2.
\end{equation*}
\end{proposition}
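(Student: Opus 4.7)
The statement packages two classical ingredients: strong $L^2_{\mathrm{loc}}$-compactness of the family $\{u_s\}$ and a Bourgain--Brezis--Mironescu (BBM) $\Gamma$-liminf inequality for the fractional seminorm as $s\to 1^-$. I would establish them in that order, since the second step needs a pre-identified limit $u$ to target.

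\textbf{Step 1 (Compactness).} Since only $s\to 1^-$ matters, I may restrict to $s\in(\tfrac12,1)$ and upgrade the given bounds to a uniform bound $\sup_s\|u_s\|_{H^{1/2}(\R^N)}<\infty$. Splitting the Gagliardo integral $[u_s]^2_{H^{1/2}(\R^N)}$ at $|x-y|=1$, the short-range piece is dominated by $[u_s]^2_{H^s(\R^N)}$ because $|x-y|^{2s-1}\le 1$ when $|x-y|<1$ and $s>\tfrac12$, while the long-range piece is bounded by a multiple of $\|u_s\|^2_{L^2(\R^N)}$ via Tonelli. The compact embedding $H^{1/2}(\R^N)\hookrightarrow L^2(K)$ for each compact $K\subset\R^N$, combined with a diagonal argument over an exhaustion $\R^N=\bigcup_n K_n$, then extracts a subsequence (still denoted $u_s$) converging strongly in $L^2_{\mathrm{loc}}(\R^N)$ to some $u$.

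\textbf{Step 2 ($u\in H^1$ and the liminf).} Put $L:=\liminf_{s\to 1^-}[u_s]^2_{H^s(\Omega)}<\infty$ and pass to a further subsequence along which the liminf is attained. Using the identity $[u_s]^2_{H^s(\R^N)}=C_{N,s}^{-1}\|(-\Delta)^{s/2}u_s\|^2_{L^2(\R^N)}$ recorded in the preliminaries, the uniform bound on $[u_s]^2_{H^s(\R^N)}$ yields a uniform bound on $\|(-\Delta)^{s/2}u_s\|_{L^2(\R^N)}$. For arbitrary $\psi\in C^\infty_c(\R^N)$, I would use the self-adjointness pairing
\begin{equation*}
\int_{\R^N}u_s\,(-\Delta)^{s/2}\psi\,\mathrm{d}x=\int_{\R^N}(-\Delta)^{s/2}u_s\,\psi\,\mathrm{d}x,
\end{equation*}
take $s\to 1^-$, and exploit $u_s\to u$ in $L^2(\mathrm{supp}\,\psi)$ together with $(-\Delta)^{s/2}\psi\to(-\Delta)^{1/2}\psi$ in $L^2(\R^N)$ (by dominated convergence on the Fourier side for $\psi\in C^\infty_c$). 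The left side converges to $\int u\,(-\Delta)^{1/2}\psi\,\mathrm{d}x$, while the right is controlled by $\|(-\Delta)^{s/2}u_s\|_{L^2}\,\|\psi\|_{L^2}$. Taking $\sup$ over $\|\psi\|_{L^2}\le 1$ gives $(-\Delta)^{1/2}u\in L^2(\R^N)$, i.e.\ $u\in H^1(\R^N)$, with $\|\nabla u\|^2_{L^2}\le\liminf_{s\to 1^-}\|(-\Delta)^{s/2}u_s\|^2_{L^2}$, which is the asserted inequality after multiplying by the appropriate constant from Proposition \ref{propositionB}.

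\textbf{Main obstacle.} The delicate point is the interchange of the $s$-limit with the nonlocal seminorm whose kernel $|x-y|^{-N-2s}$ itself depends on $s$: Fatou does not apply directly because the kernel is not monotone in $s$ across both short and long ranges. The cleanest resolution, following \cite{bourgain2001another}, is to pass to Fourier space via the identity above, split the frequency variable at $|\xi|=R$, apply dominated convergence in each regime, and let $R\to\infty$. Everything else---the uniform $H^{1/2}$ bound, the Rellich diagonalization, and the distributional testing---is routine once this BBM asymptotic is in hand.
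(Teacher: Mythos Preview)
The paper does not supply a proof of this proposition; it is quoted from \cite{bourgain2001another} and used as a black box. So there is no ``paper's proof'' to compare against, and your outline is essentially a sketch of the original BBM argument rather than an alternative to anything in the paper.

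That said, two points in your sketch deserve care. First, the $H^{1/2}$-interpolation in Step~1 is valid only under the reading that the \emph{bare} Gagliardo seminorm $\int\!\!\int |u_s(x)-u_s(y)|^2|x-y|^{-N-2s}\,\mathrm{d}x\,\mathrm{d}y$ is uniformly bounded. If instead (as the paper's later renormalisation and the appearance of $C_{N,s}$ in the conclusion suggest) the hypothesis is that the \emph{normalised} quantity $C_{N,s}[u_s]^2$ stays bounded---equivalently $(1-s)[u_s]^2$ bounded, which is the genuine BBM hypothesis---then your splitting argument fails: the short-range piece of $[u_s]^2_{H^{1/2}}$ is controlled only by the bare $[u_s]^2_{H^s}$, which may blow up like $(1-s)^{-1}$. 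In that regime the compactness is not an interpolation consequence but requires the actual Bourgain--Brezis--Mironescu argument (Fr\'echet--Kolmogorov via translation estimates, or the Fourier cut-off you allude to at the end). You should make explicit which hypothesis you are working under.

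Second, in Step~2 the justification ``$u_s\to u$ in $L^2(\mathrm{supp}\,\psi)$'' does not suffice to pass to the limit in $\int_{\R^N} u_s\,(-\Delta)^{s/2}\psi$, because $(-\Delta)^{s/2}\psi$ is \emph{not} supported in $\mathrm{supp}\,\psi$; it has algebraic tails over all of $\R^N$. The fix is easy: from $\sup_s\|u_s\|_{L^2(\R^N)}<\infty$ extract weak $L^2(\R^N)$-convergence $u_s\rightharpoonup u$, and combine it with the strong $L^2(\R^N)$-convergence $(-\Delta)^{s/2}\psi\to(-\Delta)^{1/2}\psi$ that you already obtain on the Fourier side. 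With that correction your duality argument goes through and yields $\|\nabla u\|_{L^2}\le\liminf\|(-\Delta)^{s/2}u_s\|_{L^2}$, which is the desired inequality up to the normalising constant.
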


\begin{proposition}[see \cite{bonder2020optimal}] 
\label{propositionD}
Given $s\in(0,1)$,let $f_s \in L^2(\Omega)$  be such that $f_s $ weakly convergent to $f$ in $L^2(\Omega)$ and $u_s\in H^s_0(\Omega)$ and $u \in H^1_0(\Omega)$ be the solution to 
\begin{equation}
\begin{cases}
(-\Delta)^s u_s = f_s ,  & x\in \Omega, \\
u_s=0,     & x\in \R^N \setminus \Omega,
\end{cases} 
\end{equation}
and
\begin{equation}
\begin{cases}
(-\Delta) u = f ,  & x\in \Omega, \\
u=0,     & x\in \partial \Omega,
\end{cases} 
\end{equation}
respectively. Then $u_s \to u$ strongly in $L^2(\Omega)$. Moreover as $s\to 1^-$
\begin{equation}
[u_s]_{H^s(\Omega)}^2 \to C_{N,s} \|\nabla u\|^2_2,
\end{equation} where $C_{N,s}$ be the normalising constant defined as before .
\end{proposition}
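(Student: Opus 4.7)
The strategy is to proceed in four steps: derive $s$-independent a priori bounds on $u_s$, extract a compactness limit via the Bourgain--Brezis--Mironescu criterion, identify this limit as the classical weak solution by testing against smooth functions, and finally upgrade convergence of the states to convergence of the energies.

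For the a priori bound, since $f_s \rightharpoonup f$ in $L^2(\Omega)$ the sequence $\|f_s\|_{L^2(\Omega)}$ is uniformly bounded. Testing the weak formulation with $v=u_s$ and applying the fractional Poincar\'e inequality,
$$C_{N,s}[u_s]^2_{H^s(\Omega)} = \int_\Omega f_s u_s\,dx \leq C(N,\Omega,s)\|f_s\|_{L^2(\Omega)}[u_s]_{H^s(\Omega)},$$
one obtains a uniform bound on $[u_s]_{H^s(\Omega)}$ and, by Poincar\'e again, on $\|u_s\|_{L^2(\Omega)}$. The Bourgain--Brezis--Mironescu-type compactness result stated just above then produces, along a subsequence, a limit $\tilde u \in H^1(\R^N)$ with $u_s \to \tilde u$ in $L^2_{\mathrm{loc}}(\R^N)$; since every $u_s$ vanishes outside the Lipschitz domain $\Omega$, so does $\tilde u$, giving $\tilde u \in H^1_0(\Omega)$.

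To identify $\tilde u = u$, I would test against an arbitrary $v \in C_c^\infty(\Omega)$ and use the self-adjointness of $(-\Delta)^s$ to shift the nonlocal operator onto the smooth test function,
$$\int_\Omega ((-\Delta)^s v)\, u_s\,dx = \int_\Omega f_s v\,dx.$$
Proposition \ref{propositionB} yields $(-\Delta)^s v \to -\Delta v$ in the sense of distributions, and for this smooth compactly supported $v$ the convergence is in fact uniform on compact sets, which together with the strong $L^2(\Omega)$ convergence of $u_s$ and the weak $L^2(\Omega)$ convergence of $f_s$ passes each side to the limit. The resulting identity $\int_\Omega (-\Delta v)\tilde u\,dx = \int_\Omega f v\,dx$ is the weak formulation of $-\Delta \tilde u = f$ with zero Dirichlet data, so uniqueness forces $\tilde u = u$; since every subsequential limit must agree with $u$, the full sequence converges in $L^2(\Omega)$. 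For the energy statement, testing the fractional equation with $u_s$ and the classical one with $u$ gives $C_{N,s}[u_s]^2_{H^s(\Omega)} = \int_\Omega f_s u_s\,dx \to \int_\Omega f u\,dx = \|\nabla u\|^2_{L^2(\Omega)}$ via the weak-strong pairing, yielding the claimed limit.

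The principal obstacle will be the identification step. The symmetric nonlocal bilinear form $C_{N,s}\iint (u_s(x)-u_s(y))(v(x)-v(y))/|x-y|^{N+2s}\,dx\,dy$ depends on $s$ both through the singular kernel and through the normalizing constant, so direct passage to the limit in the symmetric form is awkward. Moving all of the $s$-dependence onto a smooth test function via self-adjointness, where the convergence $(-\Delta)^s v \to -\Delta v$ is controllable, is the essential technical device that unlocks the limit.
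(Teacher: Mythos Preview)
The paper does not supply its own proof of this proposition: it is quoted verbatim from \cite{bonder2020optimal} and used as a black box in the proof of Theorem~\ref{thm}. So there is no ``paper's proof'' to compare against; your argument has to stand on its own, and it essentially does. The four-step outline (uniform a~priori bound, BBM compactness, identification of the limit via self-adjointness, energy convergence from the weak--strong pairing $\int f_s u_s \to \int f u$) is exactly the standard route and matches what one finds in the cited reference.

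One point deserves tightening. In the a~priori estimate you write
\[
C_{N,s}[u_s]^2_{H^s(\Omega)} \le C(N,\Omega,s)\,\|f_s\|_{L^2(\Omega)}\,[u_s]_{H^s(\Omega)},
\]
and then conclude a uniform bound. For this to work you must know that the combination $C(N,\Omega,s)/C_{N,s}$ stays bounded as $s\to 1^-$; equivalently, that the Poincar\'e inequality holds with a constant uniform in $s$ once the seminorm is rescaled by $C_{N,s}$ (or by $1-s$, since $C_{N,s}\sim c_N(1-s)$ near $s=1$). This uniform Poincar\'e is precisely part of the Bourgain--Brezis--Mironescu theory you are already invoking, so it is available, but you should say so explicitly rather than let the $s$-dependent constant $C(N,\Omega,s)$ pass without comment. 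With that clarification the compactness step, the identification step (shifting $(-\Delta)^s$ onto $v\in C_c^\infty(\Omega)$ and using the pointwise convergence $(-\Delta)^s v\to -\Delta v$), and the energy-convergence step are all sound.
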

 
We are going to see the limiting behavior of 
\begin{equation}
\min J_s(f_s)=\frac{1}{2}\Big([u_s]^2_{H^s(\Omega)}+\mu\|f_s\|^2_{L^2(\Omega)} \Big). 
\end{equation}
where $f_s$ is the solution of the fractional Poisson equation, let $u_s^*$ be the optimal control, i.e., be the unique minimizer of $J_s$ in $U_{ad}$. Also, by hypothesis, the $f_s^*$ is the bounded in $L^2(\Omega)$. A subsequence weakly convergent exists, say $f^*$ in $L^2(\Omega)$. Now we extend the cost functional given in \ref{eq:3.1} to all $L^2(\Omega)$ as follows:
 
\begin{equation}
F_s(f_s) = 
\begin{cases}
J_s(f_s),   & f_s \in U_{ad},\\
\infty ,    &f_s \in L^2(\Omega)\setminus U_{ad}.
\end{cases}
\end{equation}

Similarly, we can extend the local optimal control problem cost function (\ref{eq:3.4})

\begin{equation}
F(f) = 
\begin{cases}
J(f), &  f\in U_{ad}, \\
\infty, &  f \in L^2(\Omega)\setminus U_{ad}.
\end{cases}
\end{equation}  

where $C_{N,s}$ is the normalizing constant here. We need to modify the definition of the following Galiardo seminorm in fractional Sobolev space in the rest of the part of the paper, 
$$[u_s]^2_{H^s(\Omega)} := C_{N,s} \int_{\R^N}\int_{\R^N} \frac{(u(x)-u(y))^2}{|x-y|^{N+2s}}.$$ for convenience notation.

Let us define $u_k:=u_{f_k} ,J_{s_k}:=J_{k}$  and $f_k\in H^{s_k}(\Omega)$.
Now, we are ready to state and prove our main result.

\begin{theorem}[Main Theorem] \label{thm}
Under the above notation, let $u_k\in H^{s_k}_0(\Omega)$ and $u\in H^1_0(\Omega)$,\\
optimal control of nonlocal Poisson equation with zero exterior boundary conditions 
\begin{equation}
\min J_k(f_k)=\frac{1}{2}\Big([u_k]^2_{H^{s_k}(\Omega)}+\mu\|f_k\|^2_{L^2(\Omega)} \Big) 
\end{equation}
subject to (the fractional Poisson equation)
\begin{equation}
\begin{cases}
(-\Delta)^s u_k = f_k ,  & x\in \Omega, \\
u_k=0,     & x\in \R^N \setminus \Omega,
\end{cases} 
\end{equation}
and the control constraints,
\begin{equation}
 f_{s_k} \in U_{ad}\subset L^2(\Omega),
\end{equation}
and  corresponding  optimal control of the local Poisson equation  with homogeneous boundary conditions is the following 
\begin{equation}
    \min J(u,f)=\frac{1}{2}\Big(\| \nabla u \|^2_{L^2(\Omega)}+\mu \|f\|^2_{L^2(\Omega)} \Big) 
\end{equation}
subject to (the Poisson equation)

\begin{equation}
     \begin{cases}
(-\Delta)u = f ,  & x\in \Omega, \\
u=0,     & x\in  \Omega^c,
\end{cases} 
\end{equation}
and the control constraints 
\begin{equation}
   f \in U_{ad}\subset L^2(\Omega).
\end{equation}
Also consider $f_k^*$ be the minimizer of $J_k$ that is   $J_k(f_k^*)=\min_{f_k \in U_{ad}}J_k(f_k)$ and 
  $f^*$ be the minimizer of $J$ that is  $J(f^*)=\min_{f \in U_{ad}}J(f)$.
We have the following (up to the subsequence), for $s_k\in(0,1),s_k\to 1^-$  as $k\to \infty$,
\begin{enumerate}
\item  $f_k^*\rightharpoonup f^*$ in $L^2(\Omega)$.
\item  $u_k^*\to u^*$ in $L^2(\Omega)$.
\item  $J_k(f_k^*)\to J(f^*) $ in $\R$.
\end{enumerate}

\end{theorem}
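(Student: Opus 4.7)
\bigskip

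\noindent\textbf{Proof proposal.} The plan is to verify the three conclusions by combining the direct method of the calculus of variations with the strong compactness property of the state map supplied by Proposition~\ref{propositionD}. The strategy is essentially a $\Gamma$-convergence argument, but since all relevant information is concentrated in the state equation, I will carry it out directly rather than verifying the two abstract conditions of Definition~2.3.

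\medskip

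\noindent\emph{Step 1: extract a weak limit of the controls.} Since $f_k^*\in U_{ad}$, the sequence $\{f_k^*\}$ is uniformly bounded in $L^2(\Omega)$. By the Banach–Alaoglu/weak compactness lemma (Lemma 2.1 in the excerpt), a (non-relabelled) subsequence satisfies $f_k^*\rightharpoonup f^*$ in $L^2(\Omega)$. The admissible set $U_{ad}=\{f:a\le \|f\|_{L^2}\le b\}$ is convex and norm-closed, hence weakly closed, so $f^*\in U_{ad}$. This gives conclusion~(1).

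\medskip

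\noindent\emph{Step 2: pass to the limit in the state equation.} Apply Proposition~\ref{propositionD} with $f_{s_k}=f_k^*\rightharpoonup f^*$ in $L^2(\Omega)$: the states $u_k^*$ converge strongly in $L^2(\Omega)$ to the unique solution $u^*\in H^1_0(\Omega)$ of $-\Delta u^* = f^*$ in $\Omega$ with $u^*=0$ on $\partial\Omega$, and moreover
\begin{equation*}
[u_k^*]^2_{H^{s_k}(\Omega)}\longrightarrow \|\nabla u^*\|^2_{L^2(\Omega)}
\end{equation*}
in the normalization fixed just before the theorem (which absorbs $C_{N,s}$ into the seminorm). This yields conclusion~(2) and simultaneously identifies the limit of the first summand in $J_k(f_k^*)$.

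\medskip

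\noindent\emph{Step 3: prove optimality of $f^*$ via a comparison argument.} Fix an arbitrary competitor $g\in U_{ad}$ and let $u_k^g\in H^{s_k}_0(\Omega)$ solve the fractional Poisson equation with right-hand side $g$, and $u^g\in H^1_0(\Omega)$ solve its local counterpart. Applying Proposition~\ref{propositionD} to the constant sequence of sources $\{g\}$ gives
\begin{equation*}
J_k(g)=\tfrac12\bigl([u_k^g]^2_{H^{s_k}(\Omega)}+\mu\|g\|^2_{L^2(\Omega)}\bigr)\longrightarrow \tfrac12\bigl(\|\nabla u^g\|^2_{L^2(\Omega)}+\mu\|g\|^2_{L^2(\Omega)}\bigr)=J(g).
\end{equation*}
On the other hand, weak lower semicontinuity of the $L^2$-norm together with the convergence in Step~2 yields
\begin{equation*}
J(f^*)=\tfrac12\bigl(\|\nabla u^*\|^2_{L^2(\Omega)}+\mu\|f^*\|^2_{L^2(\Omega)}\bigr)\le \liminf_{k\to\infty}\tfrac12\bigl([u_k^*]^2_{H^{s_k}(\Omega)}+\mu\|f_k^*\|^2_{L^2(\Omega)}\bigr)=\liminf_{k\to\infty}J_k(f_k^*).
\end{equation*}
Combining with the optimality inequality $J_k(f_k^*)\le J_k(g)$ and passing to the limit gives $J(f^*)\le J(g)$ for every $g\in U_{ad}$, so $f^*$ is a minimizer of $J$. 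By strict convexity of $J$ (already used to establish uniqueness in the fractional case) the minimizer is unique, which shows that the whole sequence converges, not just a subsequence.

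\medskip

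\noindent\emph{Step 4: convergence of minima.} Choosing the competitor $g=f^*$ in Step~3 gives $\limsup_k J_k(f_k^*)\le \lim_k J_k(f^*)=J(f^*)$, while Step~3 also delivered $\liminf_k J_k(f_k^*)\ge J(f^*)$. Together these prove conclusion~(3).

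\medskip

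\noindent\emph{Main obstacle.} The only delicate point is the lower-bound inequality in Step~3 for the fractional seminorm part of $J_k(f_k^*)$, because $[\cdot]_{H^{s_k}(\Omega)}$ itself changes with $k$ and weak-$L^2$ convergence of the sources alone is not enough to pass to the limit. This is precisely what is bought by the strong statement in Proposition~\ref{propositionD}, namely $[u_k^*]^2_{H^{s_k}(\Omega)}\to \|\nabla u^*\|^2_{L^2(\Omega)}$; without that, one would need to invoke a Bourgain–Brezis–Mironescu type inequality together with the Mosco-type lower-semicontinuity in Propositions~\ref{propositionB}, which would require more care. Everything else is a standard direct-method/$\Gamma$-convergence manipulation in the spirit of Lemma~\ref{lemmaB}.
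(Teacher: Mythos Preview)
Your proof is correct and follows essentially the same route as the paper: extract a weak limit of the optimal controls in $U_{ad}$, use Proposition~\ref{propositionD} both for the liminf inequality along $f_k^*\rightharpoonup f^*$ and for the recovery sequence $f_k\equiv g$, and sandwich the minimal values. The only cosmetic difference is that the paper packages Steps~3--4 as the two defining conditions of $\Gamma$-convergence and then invokes Lemma~\ref{lemmaB}, and from $J_k(f_k^*)\to J(f^*)$ together with $[u_k^*]^2_{H^{s_k}}\to\|\nabla u^*\|^2$ it further deduces $\|f_k^*\|_{L^2}\to\|f^*\|_{L^2}$, upgrading the weak convergence of controls to strong.
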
 

\begin{proof}
Let $\left\{f_k^*\right\}$ be the set of optimal control for each $s_k$. Also $\left\{f_k^*\right\}\subset U_{ad}$. The admissible control set $U_{ad}$ is the bounded subset of the Hilbert space $L^2(\Omega)$. Hence, for each sequence $\left\{f_k^*\right\}$ in $U_{ad}$ admits a weakly convergent subsequence in (still the same notation) 
\begin{equation*}
f_k^* \rightharpoonup \bar{f} \text{ as }s\to 1^- \text{ for some } \bar{f}\in U_{ad}.
\end{equation*}\\
\textbf{claim:} $\bar{f}=f^*$ \\
It is enough to prove that 
"$F$  be the Gamma limit of $F_k$" or in other words, $$ F=\Gamma-\lim_{k\to \infty} F_k,$$ 

and from Lemma \ref{lemmaB} our results are immediate.\\
Let given any sequence $f_s\rightharpoonup  f$ in $L^2(\Omega)$, we will derive the
 
$$F(f)\leq \liminf_{k\to \infty}F_k(f_k).$$

As $f_k\rightharpoonup f$ in $L^2(\Omega)$, then by Proposition \ref{propositionD}, we have $u_k \to u $ strongly in $L^2(\Omega)$ as $k\to \infty $.

\begin{equation*}
\begin{split}
\liminf_{k\to \infty}F_k(f_k) &= \lim_{k\to \infty}\Big(\frac{1}{2}[u_k]_{H^{s_k}(\Omega)}^2 + \frac{\mu}{2}\|f_k\|^2 \Big)\\
&=\lim_{k\to \infty} \frac{1}{2}[u_k]_{H^{s_k}(\Omega)}^2 + \liminf_{k\to \infty}\frac{\mu}{2}\|f_k\|^2 \\
&\geq \lim_{k\to \infty} \frac{1}{2}[u_k]_{H^{s_k}(\Omega)}^2 +  \frac{\mu}{2}\|f\|^2 \\
& =  \frac{1}{2}\|\nabla u\|^2_2 +  \frac{\mu}{2}\|f\|^2 = F(f).
\end{split}
\end{equation*}

Hence $$F(f)\leq \liminf_{k\to \infty}F_k(f_k).$$

It is remaining to show that there exists, $ \forall f\in L^2(\Omega), \exists f_k \to f$, such that 
$$\lim_{k\to \infty}F_k(f_k)=F(f).$$
By choosing $f_k=f,\forall k $ then we have $\lim_{k\to \infty}f_k=f$ trivially. For this sequence, we have the following

\begin{equation*}
\begin{split}
\lim_{k\to \infty} F_k(f_k) &= \lim_{k\to \infty }\Big(\frac{1}{2}[u_k]_{H^{s_k}(\Omega)}^2 + \frac{\mu}{2}\|f_k\|^2 \Big)\\
& = \lim_{k\to \infty }\frac{1}{2}[u_k]_{H^{s_k}(\Omega)}^2+ \lim_{k\to \infty}  \frac{\mu}{2}\|f_k\|^2 \\
& =  \frac{1}{2}\|\nabla u\|^2_2 +  \frac{\mu}{2}\|f\|^2 = F(f).
\end{split}
\end{equation*}
Hence by definition of $\Gamma-$convergence we have $F=\Gamma-\lim_{k\to \infty} F_k$ in $L^2(\Omega)$. If $f_k^*$ is the optimal control of $J_k$ and $f^*$ is the optimal control of $J$. By Lemma \ref{lemmaB} we have that  $J(f^*)=\lim J_k(f_k^*)$.
Also, because of  Proposition \ref{propositionB}  together with 

$$J(f^*)=\lim J_k(f_k^*),$$ implies

 $$\|f_k^*\|^2 \to \|f^*\|^2.$$  
Hence, we have 
$$f_k^*\to f^* \text{ strongly in } L^2(\Omega). $$
This is stronger than what we wanted in the claim. 
Therefore, we established $f^*=\bar{f}$.
                                                      
The remaining part of the claim $u_k^*\to u^*$ strongly in $L^2(\Omega)$ directly follows from Proposition \ref{propositionD}.
\end{proof}
\section{Conclusion} \label{sec:5}
We have concluded that the optimal state solution of the controlled fractional Poisson equation is in the vicinity of the optimal state solution of the controlled classical Poisson equation.
\bibliographystyle{plain}
\bibliography{KNBVRRM120325}

\begin{thebibliography}{10}

\bibitem{antil2017spectral}
Harbir Antil and S{\"o}ren Bartels.
\newblock Spectral approximation of fractional pdes in image processing and
  phase field modeling.
\newblock {\em Computational Methods in Applied Mathematics}, 17(4):661--678,
  2017.

\bibitem{bakunin2008turbulence}
Oleg~G Bakunin.
\newblock {\em Turbulence and diffusion: scaling versus equations}.
\newblock Springer Science \& Business Media, 2008.

\bibitem{biccari2017poisson}
Umberto Biccari and Víctor Hernández-Santamaría.
\newblock The poisson equation from non-local to local.
\newblock {\em Electronic Journal of Differential Equations}, 2018(145):1--13,
  2018.

\bibitem{bonder2020optimal}
Juli{\'a}n~Fern{\'a}ndez Bonder, Zhiwei Cheng, and Hayk Mikayelyan.
\newblock Optimal rearrangement problem and normalized obstacle problem in the
  fractional setting.
\newblock {\em Advances in Nonlinear Analysis}, 9(1):1592--1606, 2020.

\bibitem{bourgain2001another}
Jean Bourgain, Haim Brezis, and Petru Mironescu.
\newblock Another look at sobolev spaces.
\newblock 2001.

\bibitem{braides2002oxford}
Andrea Braides.
\newblock Oxford lecture series in mathematics and its applications, 2002.

\bibitem{cea}
Jean Cea.
\newblock Lecture notes on optimization theory and algorithms.
\newblock {\em TIFR Notes}.

\bibitem{cozzi2017interior}
Matteo Cozzi.
\newblock Interior regularity of solutions of non-local equations in sobolev
  and nikol’skii spaces.
\newblock {\em Annali di Matematica Pura ed Applicata (1923-)}, 196:555--578,
  2017.

\bibitem{di2012hitchhikers}
Eleonora Di~Nezza, Giampiero Palatucci, and Enrico Valdinoci.
\newblock Hitchhikers guide to the fractional sobolev spaces.
\newblock {\em Bulletin des sciences math{\'e}matiques}, 136(5):521--573, 2012.

\bibitem{dipierro2015dislocation}
Serena Dipierro, Giampiero Palatucci, and Enrico Valdinoci.
\newblock Dislocation dynamics in crystals: a macroscopic theory in a
  fractional laplace setting.
\newblock {\em Communications in Mathematical Physics}, 333:1061--1105, 2015.

\bibitem{fiscella2015density}
Alessio Fiscella, Raffaella Servadei, and Enrico Valdinoci.
\newblock Density properties for fractional sobolev spaces.
\newblock {\em Annales Fennici Mathematici}, 40(1):235--253, 2015.

\bibitem{gilboa2009nonlocal}
Guy Gilboa and Stanley Osher.
\newblock Nonlocal operators with applications to image processing.
\newblock {\em Multiscale Modeling \& Simulation}, 7(3):1005--1028, 2009.

\bibitem{lions1971optimal}
Jacques~Louis Lions.
\newblock {\em Optimal control of systems governed by partial differential
  equations}, volume 170.
\newblock Springer, 1971.

\bibitem{meerschaert2012fractional}
Mark~M Meerschaert.
\newblock Fractional calculus, anomalous diffusion, and probability.
\newblock {\em Fractional dynamics: recent advances}, pages 265--284, 2012.

\bibitem{ros2014dirichlet}
Xavier Ros-Oton and Joaquim Serra.
\newblock The dirichlet problem for the fractional laplacian: regularity up to
  the boundary.
\newblock {\em Journal de Math{\'e}matiques Pures et Appliqu{\'e}es},
  101(3):275--302, 2014.

\bibitem{rudin1992nonlinear}
Leonid~I Rudin, Stanley Osher, and Emad Fatemi.
\newblock Nonlinear total variation based noise removal algorithms.
\newblock {\em Physica D: nonlinear phenomena}, 60(1-4):259--268, 1992.

\bibitem{troltzsch2010optimal}
Fredi Tr{\"o}ltzsch.
\newblock {\em Optimal control of partial differential equations: theory,
  methods, and applications}, volume 112.
\newblock American Mathematical Soc., 2010.

\end{thebibliography}
\end{document}